\title[Non-Asymptotic Concentration of Magnetization in the Curie-Weiss Model ]{Non-Asymptotic Concentration of Magnetization in the Curie-Weiss Model at Subcritical Temperatures}
\newcommand{\bS}{{\bf S}}               
\newcommand{\Real}{\mathbb R}
\newcommand{\al}{\alpha}                
\newcommand{\ra}{\rightarrow}           
\newtheorem{thm}{Theorem}
\newtheorem{lem}[thm]{Lemma}
\newtheorem{pro}[thm]{Proposition}
\keywords{Curie-Weiss Model, Concentration, diffusion limit}                                     
\author{Yingdong Lu\\ IBM T.J.Watson Research Center}
\begin{document}

\maketitle
\begin{abstract}
In this short paper, we obtain non-asymptotic concentration results for magnetization of the Curie-Weiss model at subcritical temperatures, 
which leads to a diffusion limit theorem of the scaled and centered magnetization driven by a Metropolis-Hasting algorithm. 
These results are complementary to results at supercritical and critical temperatures in~\cite{bierkens2017}. 
\end{abstract}

\section{Introduction}
\label{sec:intro}

Curie-Weiss model is among the simplest ferromagnetic models in statistical mechanics for quantifying phase transition, see detailed explanations in Chapter IV of~\cite{ellis2006entropy}. 
Mathematically,  Curie-Weiss model is defined by the following probability measure on $\bS:=\{-1,1\}^n$,
\begin{align}
	\label{eqn:CW_defn}
	\pi^n(x)=\exp[-\beta H^n(x)]/Z_n, \quad x\in \bS,
\end{align}
with
\begin{align*}
	H^n(x) = \frac{1}{2n} \sum_{i,j=1}^n x_i x_j -h\sum_{i=1}^n x_i,
\end{align*}
where $Z_n$ is the normalizing constant (also known as the \emph{partition function}), $\beta>0$ represents the reciprocal of the absolute temperature, 
and $h\in \Real$ is the strength of external applied magnetic field. For each state $x\in \bS$, a key quantity \emph{magnetization}, denoted as $m^n(x)$, is defined as $m^n(x) := \sum_{i=1}^nx_i$.  
Then, the Hamiltonian function $H^n(x)$ can be expressed as,
\begin{align*}
	H^n(x) = -n \left[\frac12(m^n(x))^2+ hm^n(x)\right].
\end{align*}

A common practice in computing the partition function, thus the entire probability distribution, is to use Markov chain Monte Carlo (MCMC) methods, and Metropolis-Hastings algorithm is one of such methods that is found to be successful in many cases. In~\cite{bierkens2017},  it is shown that magnetization, under proper a centering and scaling transformation, the Markov chain generated from the Metropolis-Hastings algorithm has a diffusion limit for the critical ($\beta=1$ and $h=0$) and supercritical ($\beta<1$) temperatures. The derivation depends on results non-asymptotic concentration derived in these two cases. In the paper, we are able to obtain a similar non-asymptotic concentration at subcritical temperatures (($\beta<1$ and $h\neq 0$), thus obtain a similar diffusion limit.

The rest of the paper is organized as follows, in Sec. \ref{sec:results}, we will present concepts and results related to diffusion approximations of centered and scaled megnetization; in Sec.\ref{sec:concentration}, the key results in non-asymptotic concentration are presented.

\section{Diffusion approximations to Magnetization}
\label{sec:results}

\subsection{Concentration of meanetization}
It is known, see e.g. section IV.4 in~\cite{ellis2006entropy}, that the megnetization $m^n$ concentrates around $m_0$, the unique minimum point of the following quantity, 
\begin{align}
	\label{eqn:rate_function}
	-\left(\frac12\beta m^2 +\beta h m \right) + \frac{1-m}{2} \log (1-m) + \frac{1+m}{2}\log (1+m), \quad m\in (-1,1).
\end{align}
In~\cite{ellis2006entropy}, the analysis of concentration and identification of $m_0$ are obtained through a large deviations principle analysis, see also~\cite{bovier_2006} for a different and simplified derivation.
Furthermore, $m_0$ satisfies the following Curie-Weiss equation
\begin{align}
	\label{eqn:curie-weiss}
	\beta m+\beta h =\frac12 \log \frac{1+m}{1-m},
\end{align}
which can also be written as $m =\tanh (\beta (m+h))$. Quantitatively, the concentration can be elaborated by the following key result from~\cite{Chatterjee2007},
\begin{pro} [~\cite{Chatterjee2007}]
	For all $\beta\ge 0$, $h\in \Real$ and $t\ge 0$,
	\begin{align}
		\label{eqn:concentration}
		\pi^n \left(|m^n-\tanh(\beta(m^n+h))|\ge \frac{\beta}{n} +\frac{t}{\sqrt{n}}\right) \le 2 \exp \left(-\frac{t^2}{4(1+\beta)}\right).
	\end{align}
\end{pro}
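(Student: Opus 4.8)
The plan is to follow the exchangeable-pairs (Stein's method) route to concentration, the framework in which inequalities of this type were originally obtained. Let $X=(X_1,\dots,X_n)$ be distributed according to $\pi^n$, and build an exchangeable pair $(X,X')$ from one single-site heat-bath (Gibbs) update: draw a coordinate $I$ uniformly from $\{1,\dots,n\}$, keep $X'_j=X_j$ for $j\neq I$, and let $X'_I$ be an independent sample from the conditional law $\pi^n(\,\cdot\mid X_{-I})$. Since $\pi^n$ is reversible for this update, $(X,X')$ is exchangeable. A direct computation with the Hamiltonian gives that the conditional law of $X'_i$ given $X_{-i}$ depends only on $m^n_{-i}:=\frac1n\sum_{j\neq i}X_j$, with $\mathbb{E}[X'_i\mid X_{-i}]=\tanh(\beta(m^n_{-i}+h))$; this is the point at which the Curie--Weiss fixed-point map enters the argument.

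Next I would introduce the antisymmetric statistic $F(X,X'):=\sum_{i=1}^n(X_i-X'_i)=X_I-X'_I$ and set $f(X):=\mathbb{E}[F(X,X')\mid X]=\frac1n\sum_{i=1}^n\bigl(X_i-\tanh(\beta(m^n_{-i}+h))\bigr)$. Since $|m^n_{-i}-m^n|\le 1/n$ and $x\mapsto\tanh(\beta x)$ is $\beta$-Lipschitz, $f(X)$ differs from $m^n-\tanh(\beta(m^n+h))$ by at most $\beta/n$ in absolute value, so the event in the statement is contained in $\{|f(X)|\ge t/\sqrt n\}$; it then remains to prove $\pi^n(|f(X)|\ge u)\le 2\exp\bigl(-nu^2/(4(1+\beta))\bigr)$ and take $u=t/\sqrt n$.

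For the last display I would invoke the abstract exchangeable-pairs concentration bound: if $\mathbb{E}[f(X)]=0$ (which holds here by antisymmetry and exchangeability) and $\Delta(X):=\tfrac12\,\mathbb{E}\bigl[|F(X,X')|\,|f(X)-f(X')|\mid X\bigr]\le C$ almost surely, then $f(X)$ is sub-Gaussian with $\pi^n(|f(X)|\ge u)\le 2\exp(-u^2/(2C))$. To bound $\Delta(X)$: on $\{X_I\neq X'_I\}$ one has $|F|=2$; the states $X,X'$ differ only in coordinate $I$, so $|m^n(X)-m^n(X')|=2/n$, the $i=I$ term of $f$ is unchanged, and each of the remaining $n-1$ terms $\tanh(\beta(m^n_{-i}+h))$ moves by at most $2\beta/n$, contributing at most $2\beta/n$ in total. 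Hence $|f(X)-f(X')|\le 2(1+\beta)/n$ and $\Delta(X)\le 2(1+\beta)/n=:C$, which yields exponent $u^2/(2C)=nu^2/(4(1+\beta))$, matching the claimed constant exactly.

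The conditional-law computation and the Lipschitz estimates are routine; the step that needs genuine care is the bound $|f(X)-f(X')|\le 2(1+\beta)/n$, namely controlling how the $n-1$ mean-field corrections $\tanh(\beta(m^n_{-i}+h))$ respond to a single spin flip, since it is the constant $1+\beta$ here (and not something larger) that makes the final tail match $4(1+\beta)$ in the denominator. The only other item is to state the abstract exchangeable-pairs inequality in self-contained form or to cite it, together with its (here trivial, since everything is bounded) integrability hypotheses.
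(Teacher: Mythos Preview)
The paper does not give its own proof of this proposition; it is quoted directly from Chatterjee (2007) and used as a black box in the later lemmas. Your sketch is essentially a reproduction of Chatterjee's original exchangeable-pairs argument, and the overall structure is correct.

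There is, however, a slip in your bookkeeping for $|f(X)-f(X')|$. You write that ``the $i=I$ term of $f$ is unchanged'', but this is false: while the part $\tanh(\beta(m^n_{-I}+h))$ is indeed the same for $X$ and $X'$ (since $m^n_{-I}$ does not involve coordinate $I$), the $X_I$ part changes to $X'_I$, so the $i=I$ term of $f$ moves by exactly $|X_I-X'_I|/n = 2/n$. This is precisely the source of the ``$1$'' in $1+\beta$: the $2/n$ from coordinate $I$ plus the at most $2\beta/n$ total from the $n-1$ perturbed $\tanh$ terms give $|f(X)-f(X')|\le 2(1+\beta)/n$. As written, your accounting only produces $2\beta/n$, which would not match the stated constant. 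Once this line is corrected the argument goes through and reproduces exactly the result the paper cites.
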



\subsection{Metropolis-Hastings Algorithm}

         A typical implementation of the Metropolis-Hasting algorithm for sampling from a distribution known up to the partition function uses a random walk to generate 
         a proposal, then accept or reject the proposal according the ratio of the density function of the target distribution, and this does not require the calculation of the partition function. 
         
         The random walk on $\bS$ under consideration randomly picks (with probability $1/n$ each) one coordinate and flip its sign. Therefore, the transition probability takes the form of
         \begin{align*}
         	P(x,y)=
         	\left\{ \begin{array}{cc}\frac{1}{n} & y\in R(x),
         		\\ 0& \hbox{otherwise} \end{array} \right.
         \end{align*}
         where the set $R(x) = \{ y\in \bS: \hbox{$y_k=-x_k$ for some $k\in [n]$, and $y_i=-x_i$ for all $i\neq k$}\}$. To ensure reversibility hence convergence, the Metropolis-Hastings
         step accepts a proposed move generated by the random walk with probability
         \begin{align*}
         	\min\left\{ 1, \frac{\pi^n(y)P(y,x)}{\pi^n(x)P(x,y)}\right\}=\min\left\{ 1, \frac{\pi^n(y)}{\pi^n(x)}\right\}.
         \end{align*}
         The equality is due the fact that $P(y,x)=P(x,y)=\frac{1}{n}$ in the case that $y$ is a proposed move to $x$. 
         \subsection{Centered and Scaled Magnetization}
         
         The key quantity that will be studied is the following centered and scaled magnetization,
         \begin{align*}
         	\eta^n (x) := n^{1/2} [m^n(x)-m_0], \quad x\in \bS.
         \end{align*}
         It was termed shifted and renormalized magnetization in~\cite{bierkens2017}. Following Metropolis-Hastings algorithm, the scaled Markov chain, denoted by $X^n$, has the transition probability,
         \begin{align*}
         	P^n(\eta, \eta\pm 2n^{-\frac12}) = Q^n(\eta,  \eta\pm 2n^{-\frac12})(1\wedge \exp\{\beta[\Phi^n(\eta)-\Phi^n( \eta\pm 2n^{\frac12})]\}),
         \end{align*}
         with
         \begin{align*}
         	Q^n(\eta,  \eta\pm 2n^{-\frac12}):= \frac12(1\mp (m_0+ n^{-\frac12} \eta)), \quad \Phi^n(\eta)=-\frac12 \eta^2-n^\frac12(m_0+h)\eta.
         \end{align*}
         Let $Y^n$ represent the stationary continuous time Markov chain that jumps at rate $n$ with transition probability $P^n$, and its stationary distribution $\mu^n \propto \exp(-\beta \Phi^n(\eta))$. 
         
         \subsection{Diffusion Approximation Theorem}
         
         In~\cite{bierkens2017}, analysis has been provided for the diffusion limit of the centered and scaled magnetization, as $n$ approaches infinity in the critical ($\beta=1$ and $h=0$) and supercritical ($\beta<1$) temperature. 
         In this note, we extend their result to the subcritical phase $\beta>1$ and $h\neq 0$, and establish the following result. 
         \begin{thm}
         	\label{thm:main}
         	Suppose $\beta>1$ and $h\neq 0$, $Y^n$ jumps at rate $n$, then $Y^n$ converge weakly in $D([0,\infty), \Real)$  to $Y$, where $Y$ is the stationary Ornstein-Uhlenbeck process defined by
         	\begin{align}
         		\label{eqn:limiting-diffusion}
         		dY(t) = 2\ell(h, \beta) Y(y) dt + \sigma(h, \beta) dB(t),
         	\end{align}
         	with \begin{align*}
         		\sigma(h, \beta)=2\sqrt{1-|m_0(h, \beta)|}, \quad \ell(h, \beta)=\frac{1}{1+|m_0(h, \beta)|}-\beta(1-|m_0(h, \beta)|).
         	\end{align*}
         \end{thm}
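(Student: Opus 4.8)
The plan is to use the standard operator-convergence and martingale-problem approach to diffusion approximation (Ethier and Kurtz). Write $\mathcal{L}^n$ for the generator of the continuous-time chain $Y^n$ and $\mathcal{L}$ for the generator of the Ornstein--Uhlenbeck process in \eqref{eqn:limiting-diffusion}. It suffices to establish: (i) $\mathcal{L}^n f \to \mathcal{L}f$ uniformly on compact sets for every $f$ in the core $C_c^\infty(\Real)$ of $\mathcal{L}$; (ii) tightness of $\{Y^n\}$ in $D([0,\infty),\Real)$; (iii) well-posedness of the martingale problem for $\mathcal{L}$; and (iv) weak convergence of the stationary laws $\mu^n$ to the invariant law of $Y$. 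Given (i)--(iv), every subsequential weak limit of $Y^n$ is a stationary solution of the $\mathcal{L}$-martingale problem, which is unique by (iii), so $Y^n\Rightarrow Y$ and the limit is the stationary Ornstein--Uhlenbeck process.

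For (i), since $Y^n$ jumps at rate $n$ with kernel $P^n$,
\[
\mathcal{L}^n f(\eta)=n\sum_{\pm}P^n(\eta,\eta\pm 2n^{-1/2})\,[\,f(\eta\pm 2n^{-1/2})-f(\eta)\,].
\]
First I would expand $\Phi^n(\eta)-\Phi^n(\eta\pm 2n^{-1/2})$ and invoke the Curie--Weiss identity $2\beta(m_0+h)=\log\frac{1+m_0}{1-m_0}$ (equivalently \eqref{eqn:curie-weiss}) to show that, for $\eta$ in any fixed compact set and $n$ large, one of the two proposed moves is accepted with probability exactly $1$ while the other is accepted with probability $\frac{1-|m_0|}{1+|m_0|}\exp(\pm 2\beta n^{-1/2}\eta+O(n^{-1}))$; in particular the $1\wedge(\cdot)$ truncation is inactive on compacts. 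This is precisely the step that uses $h\neq0$, hence $|m_0|>0$, and it is what makes the subcritical phase behave differently from the critical one. Expanding the proposal weights $Q^n$ and Taylor-expanding $f$ to third order, the would-be $O(n^{1/2})$ contributions cancel and $\mathcal{L}^n f$ converges, uniformly on compacts, to $\mathcal{L}f$: the second-order coefficient is $2\lim_n\big(P^n(\eta,\eta+2n^{-1/2})+P^n(\eta,\eta-2n^{-1/2})\big)=2(1-|m_0|)=\tfrac12\sigma(h,\beta)^2$, and the first-order term produces the Ornstein--Uhlenbeck drift, the coefficient $\ell(h,\beta)$ emerging after the simplification $1+\frac{1-|m_0|}{1+|m_0|}=\frac{2}{1+|m_0|}$.

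For (ii) I would lean on the non-asymptotic concentration bound \eqref{eqn:concentration}: together with the large-deviations upper bound of \cite{ellis2006entropy} (which confines $m^n$ to an $o(1)$-neighborhood of the global minimizer $m_0$ of \eqref{eqn:rate_function}) and the fact that $m\mapsto\tanh(\beta(m+h))$ is Lipschitz with stable fixed point $m_0$, it upgrades to a uniform-in-$n$ sub-Gaussian tail bound $\mu^n(|\eta|>K)\le C e^{-cK^2}$. Since $Y^n$ has marginal $\mu^n$ at every time, a union bound over the rate-$n$ jump times (each of size $2n^{-1/2}$) gives the compact containment condition, and the Aldous--Rebolledo condition follows from the same bounded-jump structure together with $\sup_{|\eta|\le K}|\mathcal{L}^n f(\eta)|$ being bounded uniformly in $n$. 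For (iii), the martingale problem for the Ornstein--Uhlenbeck operator is classically well posed since its coefficients are affine; equivalently, its explicit Gaussian transition semigroup is the unique solution. For (iv), a Gaussian (Laplace-method) expansion of \eqref{eqn:rate_function} at its non-degenerate minimizer, whose curvature $\frac1{1-m_0^2}-\beta$ is positive in the subcritical phase and is again quantified by \eqref{eqn:concentration}, gives $\mu^n\Rightarrow\mathcal N\big(0,(1-|m_0|)/\ell(h,\beta)\big)$, which one checks is the invariant law of \eqref{eqn:limiting-diffusion}. Assembling (i)--(iv) via the Ethier--Kurtz theory finishes the proof.

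The hard part is (ii). The generator expansion in (i) is essentially bookkeeping once one has removed the acceptance truncation (which is where $h\neq0$ is essential), but controlling the fluctuations of the stationary chain uniformly in $n$ --- ruling out escape of mass and supplying the compact containment condition --- genuinely relies on the non-asymptotic concentration estimate \eqref{eqn:concentration}; this is exactly the role, for $\beta>1$ and $h\neq0$, that the corresponding bounds play in \cite{bierkens2017} for the critical and supercritical temperatures.
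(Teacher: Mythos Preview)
Your overall strategy---Ethier--Kurtz generator convergence plus concentration for containment---is the same as the paper's, and your diagnosis that (ii) is the hard step is exactly right. The generator computation (i) is essentially the content of the paper's Lemma~\ref{lem:moments}.

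The gap is in your execution of (ii). You propose to obtain compact containment from a uniform sub-Gaussian marginal bound $\mu^n(|\eta|>K)\le Ce^{-cK^2}$ by ``a union bound over the rate-$n$ jump times,'' but on $[0,T]$ the chain makes order $nT$ jumps, so this union bound gives at best $\sim nT\cdot Ce^{-cK^2}$, which does not vanish for any fixed $K$ as $n\to\infty$. The paper resolves this by working not with fixed compacts but with the growing sets $F^{n,\delta}=\{|\eta|\le n^\delta\}$, $0<\delta<\tfrac12$: Lemma~\ref{lem:outsideF} shows $n^\alpha\,\pi^n(\eta^n\notin F^{n,\delta})\to0$ for every $\alpha>0$, so that even after multiplying by the $O(n)$ jump count one still has $P\bigl(Y^n_t\notin F^{n,\delta}\text{ for some }t\le T\bigr)\to0$. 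The proof of Lemma~\ref{lem:outsideF} requires the local linear lower bound $|m-\tanh(\beta(m+h))|\ge\iota_0|m-m_0|$ (Lemma~\ref{lem:right_side}), which in turn rests on the slope inequality $\beta<(1-m_0^2)^{-1}$ (Lemma~\ref{lem:slope}); this last fact is where the hypothesis $\beta>1$, $h\neq0$ genuinely enters, and it is what you gesture at with ``stable fixed point'' but do not verify. With containment established on $F^{n,\delta}$, the generator convergence must likewise be uniform on $F^{n,\delta}$ (Lemma~\ref{lem:moments}), and the paper concludes via the localized Corollary~4.8.7 of \cite{ethier2009markov} rather than the fixed-compact martingale-problem version you invoke; in particular your separate step (iv) is absorbed into that corollary.
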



         The key to proving Theorem \ref{thm:main} is to establish non-asymptotic concentration results, which will be carried out in the next section.

\section{Non-Asymptotic Concentration Inequality for Sub-Critical Phase}
\label{sec:concentration}

When $\beta>1$, $h>0$ (The case of $\beta>1$, $h<0$ can be dealt similar by symmetry), it is known, see e.g. Chapter IV in~\cite{ellis2006entropy}, there are potentially three roots to the Curie-Weiss equation \eqref{eqn:curie-weiss}, with $m_0>0$ being the largest, and two other roots being negative. It was pointed in both ~\cite[Chapter VI]{ellis2006entropy} and ~\cite{bierkens2017}, in the case of $h\neq 0$, the other two roots are not global minimum. 
The following lemma decides the order of derivatives of the two sides of \eqref{eqn:curie-weiss} at $m_0$. 

         \begin{lem}
         	\label{lem:slope}
         	$\beta < \frac{1}{1-m_0^2}$ when $\beta >1$ and $h>0$.
         \end{lem}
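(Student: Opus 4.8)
The plan is to recast the claimed inequality geometrically: it says exactly that at the largest root $m_0$ the curve $y=\tanh(\beta(m+h))$ crosses the diagonal $y=m$ with slope strictly less than $1$. To see this, set $f(m):=\tanh(\beta(m+h))$ and $\phi(m):=f(m)-m$ on $(-1,1)$. Differentiating and using $\tanh(\beta(m_0+h))=m_0$ (the Curie--Weiss equation \eqref{eqn:curie-weiss}), one gets $f'(m_0)=\beta\bigl(1-\tanh^2(\beta(m_0+h))\bigr)=\beta(1-m_0^2)$, so the desired bound $\beta<\tfrac{1}{1-m_0^2}$ is equivalent to $f'(m_0)<1$, i.e. to $\phi'(m_0)<0$ (after dividing by $1-m_0^2>0$). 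Thus it suffices to prove $\phi'(m_0)<0$.

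Next I would record two elementary facts. First, since $h>0$ and $\beta>0$, $\phi(0)=\tanh(\beta h)>0$. Second, $\tanh$ is strictly concave on $(0,\infty)$, and because $h>0$ and $m_0>0$ (recall $m_0$ is the positive largest root identified in Chapter IV of~\cite{ellis2006entropy}), the interval $(-h,1)$ contains both $0$ and $m_0$ in its interior and lies in the region where $m+h>0$; hence $f$, and therefore $\phi$ (which is $f$ minus a linear function), is strictly concave on $(-h,1)$.

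Now I combine the two. Concavity of $\phi$ forces its graph to lie above the chord joining $(0,\phi(0))$ and $(m_0,\phi(m_0))=(m_0,0)$, so $\phi(m)>0$ for all $m\in[0,m_0)$; letting $m\uparrow m_0$ in the difference quotient $\frac{\phi(m_0)-\phi(m)}{m_0-m}=\frac{-\phi(m)}{m_0-m}<0$ then gives $\phi'(m_0)\le 0$. To upgrade this to a strict inequality I argue by contradiction: if $\phi'(m_0)=0$, then since $\phi'$ is strictly decreasing on $(-h,1)$, $\phi$ would be strictly increasing on $(-h,m_0]$ and strictly decreasing on $[m_0,1)$, hence attain a strict global maximum at $m_0$, so $\phi(0)<\phi(m_0)=0$ — contradicting $\phi(0)>0$. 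Therefore $\phi'(m_0)<0$, i.e. $\beta(1-m_0^2)<1$, which is the claim.

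The only step that genuinely requires care is the placement of $m_0$ strictly inside the concavity region of $\tanh(\beta(\cdot+h))$, i.e. $m_0>-h$; this is precisely where the hypothesis $h>0$ (together with $m_0>0$) enters, and it is what ties the statement to the subcritical phase. Everything else is routine. As an alternative route, one can instead differentiate the function in \eqref{eqn:rate_function} twice, obtaining $-\beta+\tfrac{1}{1-m_0^2}$ at $m_0$; this is $\ge 0$ since $m_0$ is a global minimizer, and it cannot vanish, because the third derivative there equals $\tfrac{2m_0}{(1-m_0^2)^2}>0$, so a vanishing second derivative would make $m_0$ an inflection point and hence not a minimum — a contradiction. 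I would present the $\tanh$-concavity argument as the main proof since it is self-contained, and mention this second derivation as a remark.
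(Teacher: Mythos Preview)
Your argument is correct, and it takes a genuinely different route from the paper's. The paper works with the Curie--Weiss equation in the form $\beta m+\beta h=\tfrac12\log\tfrac{1+m}{1-m}$: it introduces the point $m_1:=\sqrt{(\beta-1)/\beta}$ where the two sides have equal slope, parametrizes both sides at $m_1$ as explicit functions of $\beta$, and shows by a derivative comparison in $\beta$ that the affine left side exceeds the right side at $m_1$; hence the crossing $m_0$ must lie to the right of $m_1$, which yields $\tfrac{1}{1-m_0^2}>\tfrac{1}{1-m_1^2}=\beta$. Your approach instead passes to the equivalent form $m=\tanh(\beta(m+h))$ and exploits the strict concavity of $\tanh$ on $(0,\infty)$ together with $\phi(0)=\tanh(\beta h)>0$, which is shorter and avoids the auxiliary calculation in $\beta$. (In fact, once you know $\phi(0)>0$ and $\phi$ is concave on $(-h,1)$, the tangent-line inequality at $m_0$ already gives $\phi'(m_0)\le \dfrac{-\phi(0)}{m_0}<0$ directly, so the contradiction step is not strictly needed.) Your alternative via the second derivative of the rate function \eqref{eqn:rate_function} is perhaps the most natural argument given how $m_0$ is introduced in the paper, and it is also valid; the paper does not use it. What the paper's approach buys is the explicit threshold $m_1$, but that quantity is not used elsewhere, so your proof loses nothing.
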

         \begin{proof}
         	Define $m_1:=\sqrt{(\beta -1)/\beta}$, hence $m_1\in [0,1)$ and $\beta = \frac{1}{1-m_1^2}$.  Note that $\beta$ is the slope of the left hand side of the Curie-Weiss equation \eqref{eqn:curie-weiss}, which is in an affine form.  $\frac{1}{1-m^2}$ is the derivative of the right hand side, and it is increasing in $m$. $m_1$ is the point where these two derivatives are the same. When $m\ge m_1$, the derivative of the left hand side will remain to be $\beta$, on the other hand, the derivative of the right hand side will increase in $m$. In the following, we will show that $\beta m_1+\beta h > \frac12\log \frac{1+m_1}{1-m_1}$. Therefore, the equality of the two sides must happen after $m_1$, i.e., $m_1<m_0$, and the result follows. 
         	
         	At $m_1$, write both sides of the Curie-Weiss equation \eqref{eqn:curie-weiss}  as functions of $\beta$. We have, the left hand side takes the form of $f_L(\beta)=\sqrt{(\beta -1)\beta}+h\beta$, and the right hand side $f_R(\beta)=\log(\sqrt{\beta}+\sqrt{\beta-1})$. Hence, $f'_L(\beta)=\frac{2\beta-1}{2\sqrt{(\beta -1)\beta}}+h$, and  $f'_R(\beta)=\frac{1}{2\sqrt{(\beta -1)\beta}}$. So, $f'_L(\beta)> f'_R\beta)$ for $\beta\ge 1$ and $h>0$. Furthermore, $f_L(1)=h >0=f_R(1)$. Thus we conclude $f_L(\beta)>f_R(\beta)$, i.e. $\beta m_1+\beta h > \frac12\log \frac{1+m_1}{1-m_1}$.
         \end{proof}

         
         \begin{lem}
         	\label{lem:right_side}
         	There exist $\iota_0>0$ and $M_1, M_2\in[0,1]$ satisfying  $0\le M_1< m_0$ and $m_0<M_2\le 1$, such that
         
         	\begin{align}
         	\label{eqn:left_side}
         	\beta m+\beta h \ge \frac12 \log \frac{1+m'}{1-m'},
         \end{align}	
         	holds for all $M_1\le m\le m_0$, with $m'=m-\iota_0 (m-m_0)$. Meanwhile,
         	\begin{align}
         		\label{eqn:right_side}
         		\beta m+\beta h \le\frac12 \log \frac{1+m'}{1-m'},
         	\end{align}	
         	holds for all $m_0\le m\le M_2$.
         \end{lem}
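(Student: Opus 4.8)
The plan is to read Lemma~\ref{lem:right_side} as a quantitative strengthening of Lemma~\ref{lem:slope}: the \emph{strict} inequality $\beta<\frac{1}{1-m_0^2}$ between the slope of the left side of \eqref{eqn:curie-weiss} and the slope of the right side at the crossing point $m_0$ leaves room to absorb a small linear contraction of the argument toward $m_0$. Write $g(m):=\frac12\log\frac{1+m}{1-m}$, so that $g'(m)=\frac{1}{1-m^2}$ and $g$ is smooth on $(-1,1)$, and let $L(m):=\beta m+\beta h$ be the affine left-hand side, so that $L(m_0)=g(m_0)$ by \eqref{eqn:curie-weiss}. For $\iota_0\in(0,1)$ put $m'=m-\iota_0(m-m_0)=(1-\iota_0)m+\iota_0 m_0$; then $m'$ lies strictly between $m$ and $m_0$, one has $m'-m_0=(1-\iota_0)(m-m_0)$, and $m'\in(-1,1)$ whenever $m\in[-1,1]$ because $m_0\in(0,1)$. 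Set $\phi(m):=L(m)-g(m')$; this is $C^1$ on $(-1,1)$ with $\phi(m_0)=0$.

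Next I would differentiate: $\phi'(m)=\beta-(1-\iota_0)g'(m')$, so that $\phi'(m_0)=\beta-\frac{1-\iota_0}{1-m_0^2}$. By Lemma~\ref{lem:slope} the quantity $1-\beta(1-m_0^2)$ is strictly positive, so any choice $\iota_0\in\bigl(0,\,1-\beta(1-m_0^2)\bigr)$ — for instance $\iota_0=\tfrac12\bigl(1-\beta(1-m_0^2)\bigr)$, which automatically lies in $(0,1)$ — gives $1-\iota_0>\beta(1-m_0^2)$ and hence $\phi'(m_0)<0$.

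Finally, since $\phi'$ is continuous and $\phi'(m_0)<0$, there is $\delta\in\bigl(0,\min(m_0,1-m_0)\bigr)$ with $\phi'<0$ on $(m_0-\delta,m_0+\delta)$; take $M_1:=m_0-\delta/2$ and $M_2:=m_0+\delta/2$, so that $0<M_1<m_0<M_2<1$. Then $\phi$ is strictly decreasing on $[M_1,M_2]$, which together with $\phi(m_0)=0$ gives $\phi(m)\ge0$ for $m\in[M_1,m_0]$ and $\phi(m)\le0$ for $m\in[m_0,M_2]$; unwinding the definitions of $\phi$, $L$ and $g$, these are exactly \eqref{eqn:left_side} and \eqref{eqn:right_side}. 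The only step needing care is the choice of $\iota_0$: it must be small enough that scaling the slope $g'(m_0)$ by the factor $1-\iota_0$ keeps it above $\beta$, and Lemma~\ref{lem:slope} is precisely what makes such an $\iota_0$ available; everything after that is a routine continuity-and-monotonicity argument, so I do not expect a genuine obstacle here.
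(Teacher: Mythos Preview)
Your proof is correct and follows essentially the same approach as the paper's: both define the difference of the two sides (the paper's $K(m,\iota)$ is your $-\phi(m)$), note it vanishes at $m_0$, invoke Lemma~\ref{lem:slope} to get a strict sign on the $m$-derivative there, and finish by continuity and monotonicity on a small interval about $m_0$. The only cosmetic difference is that the paper frames the derivative step as a two-variable continuity argument in $(m,\iota)$ near $(m_0,0)$, whereas you fix $\iota_0\in\bigl(0,\,1-\beta(1-m_0^2)\bigr)$ explicitly first; your version is slightly more concrete but otherwise identical.
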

         \begin{proof} Consider a bivariate  function $K(m, \iota) = \frac12 \log \frac{1+m-\iota (m-m_0)}{1-m-\iota (m-m_0)}-(\beta m+\beta h )$. It is easy to see that $K(m_0, 0)$=0. Meanwhile, we have,  $\frac{\partial}{\partial m}K(m_0, 0) =\frac{1}{1-m_0^2}-\beta>0$ from Lemma \ref{lem:slope}. Therefore, there exists a neighborhood of $(m_0, 0)$ within which $\frac{\partial}{\partial m}K(m, \iota)>0$.  Pick an $\iota_0>0$, as well as $M_1, M_2\in[0,1]$ satisfying  $0\le M_1< m_0$ and $m_0<M_2\le 1$, such that, $\frac{\partial}{\partial m}K(m, \iota_0)>0$ for $m\in [M_1, M_2]$. Apparently, $K(m, \iota_0)\le 0$ when $m\le m_0$ and $K(m, \iota_0)\ge 0$ when $m\ge m_0$ , and they correspond to the two inequalities  \eqref{eqn:left_side} and \eqref{eqn:right_side}.
         \end{proof}
          Denote
         		$F^{n,\delta}:=\{\eta\in X^n: |\eta|\le n^\delta\}$.
         Then the above lemmas allow us to obtain the following result, which extends Lemma 5 in~\cite{bierkens2017} to subcritical temperature. 
         \begin{lem}
         	\label{lem:outsideF}
         	Let $0<\delta<\frac12$. For $\beta>1$, $h\neq 0$, and any $\al>0$, we have, 
         	\begin{align*}
         		\lim_{n\ra \infty} n^\al \pi^n(\eta^n(x) \notin F^{n,\delta}) =0.
         	\end{align*}
         \end{lem}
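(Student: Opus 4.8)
The plan is to rewrite the event $\{\eta^n(x)\notin F^{n,\delta}\}$, which by the definition $\eta^n=n^{1/2}(m^n-m_0)$ is exactly $\{\,|m^n(x)-m_0|>n^{\delta-1/2}\,\}$, as the union of a piece to which Chatterjee's inequality \eqref{eqn:concentration} applies and a ``bulk'' piece handled by a large deviations estimate. Throughout put $g(m):=m-\tanh(\beta(m+h))$, so that $m_0$ is a zero of $g$, and let $\iota_0>0$ and $0\le M_1<m_0<M_2\le1$ be as in Lemma~\ref{lem:right_side}. The decomposition is
\begin{align*}
\{|m^n-m_0|>n^{\delta-1/2}\}\ &\subseteq\ \big(\{|m^n-m_0|>n^{\delta-1/2}\}\cap\{M_1\le m^n\le M_2\}\big)\\
&\qquad\cup\ \{m^n\notin(M_1,M_2)\},
\end{align*}
and it suffices to show that $n^{\al}$ times the $\pi^n$-probability of each of the two sets on the right tends to $0$ for every $\al>0$.

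First I would recast Lemma~\ref{lem:right_side} as the one-sided Lipschitz-type lower bound $|g(m)|=|m-\tanh(\beta(m+h))|\ge\iota_0|m-m_0|$ for all $m\in[M_1,M_2]$. Indeed, for $M_1\le m\le m_0$ the inequality \eqref{eqn:left_side} reads $\beta(m+h)\ge\tfrac12\log\tfrac{1+m'}{1-m'}$ with $m'=m+\iota_0(m_0-m)$; applying $\tanh$ gives $\tanh(\beta(m+h))\ge m'$, hence $g(m)\le m-m'=-\iota_0(m_0-m)\le0$, i.e. $|g(m)|\ge\iota_0(m_0-m)$, and the range $m_0\le m\le M_2$ is symmetric using \eqref{eqn:right_side}. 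Therefore, on the first set of the decomposition one has $|m^n-\tanh(\beta(m^n+h))|\ge\iota_0 n^{\delta-1/2}$, so \eqref{eqn:concentration} with $t=t_n$ determined by $\tfrac{\beta}{n}+\tfrac{t_n}{\sqrt n}=\iota_0 n^{\delta-1/2}$, that is $t_n=\iota_0 n^{\delta}-\beta n^{-1/2}\ge\tfrac{\iota_0}{2}n^{\delta}$ for all $n$ large, yields
\begin{align*}
\pi^n\big(\{|m^n-m_0|>n^{\delta-1/2}\}\cap\{M_1\le m^n\le M_2\}\big)\ \le\ 2\exp\!\Big(-\frac{\iota_0^2\,n^{2\delta}}{16(1+\beta)}\Big),
\end{align*}
which, since $\delta>0$, is $o(n^{-\al})$ for every $\al>0$.

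For the second set, Chatterjee's inequality alone is not enough: when $\beta>1$ and $h>0$ the map $g$ has two further zeros in $(-1,M_1)$ (the remaining roots of \eqref{eqn:curie-weiss}), so $|g|$ has no lower bound there. Instead I would invoke the large deviations upper bound for the magnetization (\cite{ellis2006entropy}, Ch.~IV; see also \cite{bovier_2006}): under $\pi^n$ the sequence $m^n$ obeys an LDP with rate function $I(\cdot)-\min I$, where $I$ is \eqref{eqn:rate_function}. Because $h\neq0$, the unique global minimizer of $I$ on $[-1,1]$ is $m_0$ (the two negative roots of \eqref{eqn:curie-weiss} are not global minima), so $c:=\min\{I(m)-I(m_0):m\in[-1,M_1]\cup[M_2,1]\}>0$, and the LDP upper bound gives $\pi^n(m^n\notin(M_1,M_2))\le e^{-cn/2}$ for all $n$ large, again $o(n^{-\al})$. (A self-contained version of this step: by Stirling's formula $\tfrac1n\log\pi^n(m^n=m)=-(I(m)-I(m_0))+O(\tfrac{\log n}{n})$ uniformly in the lattice values of $m$, so summing over the at most $n+1$ values with $m\notin(M_1,M_2)$ gives the same bound.) Putting the two pieces together proves the lemma. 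The only way the hypotheses on $\delta$ are used is that $\delta>0$, which is what makes the right-hand side of the displayed bound decay super-polynomially.

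The step I expect to be the main obstacle is the second one: it is not a consequence of \eqref{eqn:concentration} but relies on $m_0$ being the \emph{unique} global minimizer of the rate function, which is exactly where the hypothesis $h\neq0$ enters — for $h=0$ the statement is false, since then $m^n$ concentrates on $\{-m_0,m_0\}$ with equal weight. By contrast the first piece is routine once Lemma~\ref{lem:right_side} is read as the bound $|g(m)|\ge\iota_0|m-m_0|$ near $m_0$.
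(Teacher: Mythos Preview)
Your argument is correct, and on the piece $\{M_1\le m^n\le M_2\}$ it coincides with the paper's proof: both rewrite Lemma~\ref{lem:right_side} as the lower bound $|m-\tanh(\beta(m+h))|\ge\iota_0|m-m_0|$ on $[M_1,M_2]$ and then feed this into Chatterjee's inequality~\eqref{eqn:concentration} with $t_n=\iota_0 n^{\delta}-\beta n^{-1/2}\sim\iota_0 n^{\delta}$, giving a bound that decays like $\exp(-c\,n^{2\delta})$.

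The difference is your decomposition. The paper does not split off $\{m^n\notin(M_1,M_2)\}$; it writes directly
\[
\pi^n\big(|m^n-m_0|\ge n^{\delta-1/2}\big)\ \le\ \pi^n\big(|m^n-\tanh(\beta(m^n+h))|\ge \iota_0 n^{\delta-1/2}\big)
\]
and applies~\eqref{eqn:concentration}. As you correctly anticipate, this inclusion is not literally valid on all of $[-1,1]$ when $\beta>1$ and $|h|$ is small, because $g(m)=m-\tanh(\beta(m+h))$ has two further zeros outside $[M_1,M_2]$ where $|g|$ vanishes while $|m-m_0|$ does not. Your second step --- bounding $\pi^n(m^n\notin(M_1,M_2))$ via the LDP for $m^n$, using that $m_0$ is the \emph{unique} global minimizer of~\eqref{eqn:rate_function} when $h\neq0$ --- is exactly what is needed to make the argument complete, and it is also where the hypothesis $h\neq0$ is genuinely used. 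In short, the paper's proof is essentially the first half of yours; your LDP step patches a point the paper leaves implicit.
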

         \begin{proof}
         	Lemma \ref{lem:right_side} indicates that there exists $\iota_0>0$, such that inequalities \eqref{eqn:left_side} and \eqref{eqn:right_side} are satisfied,
         	or equivalently, $|m- tanh(\beta(m+h))| \ge \iota_0 |m-m_0|$ for $m \in[M_1, M_2]$. Therefore, for sufficiently large $n$, we have
         	\begin{align*}
         		\pi^n(|m-m_0| \ge n^{\delta-\frac12}) \le&\pi^n(|m- tanh(\beta(m+h))|  \ge \iota_0 n^{\delta-\frac12})
         		\\=& \pi^n\left(|m- tanh(\beta(m+h))|  \ge  \frac{\beta}{n}+\frac{t_n}{\sqrt{n}}\right)
         		\le 2\exp\left(-\frac{t_n^2}{4(1+\beta)}\right),
         	\end{align*}
         	with $$t_n=\left(\iota_0 n^{\delta-\frac12} -\frac{\beta}{n}\right)n^{\frac12}.$$
         	The result follows from the fact that $t_n\rightarrow \infty$ in the order of $n^\delta$ as $n\rightarrow \infty$. 
         \end{proof}
         
         The following lemma is known in~\cite{bierkens2017}. 
         \begin{lem}
         	\label{lem:moments}
         	Let $0<\delta<\frac12$. For $h\neq 0, \beta>0$. we have, for $r <\min(1, (k+1)(\frac12-\delta))$, 
		\begin{align*}
         		\lim_{n\rightarrow \infty} \sup_{\eta\in F^{n,\delta}} |nE_\eta^n[(Y-\eta)^2]-\ell(h,\beta)^2|=0,
		\end{align*}
         	\begin{align*}
         		\lim_{n\rightarrow \infty} \sup_{\eta\in F^{n,\delta}} |n E_\eta^n[(Y-\eta)^2]-\sigma(h,\beta)^2|=0.
         	\end{align*}
         	\begin{align*}
         		\lim_{n\rightarrow \infty} \sup_{\eta\in F^{n,\delta}} n E_\eta^n[|Y-\eta |^p]=0,
         	\end{align*}
	for any $p>2$.
         \end{lem}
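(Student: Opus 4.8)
The plan is to treat this as the subcritical instance of the local kernel expansion carried out for the critical and supercritical phases in~\cite{bierkens2017}: the transition kernel $P^n$ is given there by the same explicit formulas, so only the value of $m_0$ changes, and the whole argument reduces to a Taylor expansion of $P^n(\eta,\eta\pm 2n^{-1/2})$ around $\eta$ that is \emph{uniform} on $F^{n,\delta}$. I would organize it in four steps.

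\textit{Step 1 (reduction to the jump probabilities).} Under $P^n$ the increment $Y-\eta$ takes only the three values $0$ and $\pm 2n^{-1/2}$, so, writing $p_\pm(\eta):=P^n(\eta,\eta\pm 2n^{-1/2})$, every moment is an explicit finite sum:
\begin{align*}
	E^n_\eta[Y-\eta]=2n^{-1/2}(p_+-p_-),\qquad E^n_\eta[(Y-\eta)^2]=4n^{-1}(p_++p_-),
\end{align*}
and $E^n_\eta[|Y-\eta|^p]=(2n^{-1/2})^p(p_++p_-)\le 2^p n^{-p/2}$. The last bound already disposes of the higher-moment estimate: $nE^n_\eta[|Y-\eta|^p]\le 2^p n^{1-p/2}\to 0$ for every $p>2$, uniformly in $\eta$, with no appeal to concentration.

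\textit{Step 2 (which branch of the Metropolis minimum is active).} From $\Phi^n(\eta)=-\tfrac12\eta^2-n^{1/2}(m_0+h)\eta$ one computes
\begin{align*}
	\beta\big[\Phi^n(\eta)-\Phi^n(\eta\pm 2n^{-1/2})\big]=\pm 2\beta(m_0+h)\pm 2\beta n^{-1/2}\eta+2\beta n^{-1},
\end{align*}
and the Curie--Weiss equation \eqref{eqn:curie-weiss} identifies $2\beta(m_0+h)=\log\frac{1+m_0}{1-m_0}$, which is \emph{strictly positive} because $h>0$ forces $m_0>0$. Hence for $\eta\in F^{n,\delta}$ and all large $n$ (the correction $2\beta n^{-1/2}\eta+2\beta n^{-1}$ being $o(1)$ uniformly on $F^{n,\delta}$, as $\delta<\tfrac12$) the Metropolis factor $1\wedge\exp\{\cdot\}$ equals $1$ in the $+$ direction and equals the exponential itself in the $-$ direction. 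With $Q^n(\eta,\eta\pm 2n^{-1/2})=\tfrac12(1\mp(m_0+n^{-1/2}\eta))$ this gives, for large $n$,
\begin{align*}
	p_+=\tfrac12(1-m_0)-\tfrac12 n^{-1/2}\eta,\qquad p_-=\Big(\tfrac12(1+m_0)+\tfrac12 n^{-1/2}\eta\Big)\frac{1-m_0}{1+m_0}\,e^{-2\beta n^{-1/2}\eta+2\beta n^{-1}}.
\end{align*}

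\textit{Step 3 (assembling the limits).} Expanding the exponential in $p_-$ and using $\frac{1-m_0}{1+m_0}(1+m_0)=1-m_0$ and $1+\frac{1-m_0}{1+m_0}=\frac{2}{1+m_0}$, the constant-in-$\eta$ parts of $p_+$ and $p_-$ coincide, so that $p_++p_-\to 1-m_0=1-|m_0|$ and $p_+-p_-=-\ell(h,\beta)\,n^{-1/2}\eta+R_n(\eta)$, the coefficient collapsing to $-\big(\tfrac{1}{1+m_0}-\beta(1-m_0)\big)$ and $R_n(\eta)$ being a remainder of the form (a fixed negative power of $n$)$\times$(a power of $|\eta|$). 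Consequently $nE^n_\eta[(Y-\eta)^2]=4(p_++p_-)\to 4(1-|m_0|)=\sigma(h,\beta)^2$, while $nE^n_\eta[Y-\eta]=2n^{1/2}(p_+-p_-)\to -2\ell(h,\beta)\,\eta$, the (mean-reverting, by Lemma~\ref{lem:slope}) drift of the limiting Ornstein--Uhlenbeck process; these yield the remaining two displayed limits.

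\textit{Step 4 (uniformity --- the main obstacle).} The only real work is to make Steps~2--3 uniform over $F^{n,\delta}$. Two things need checking. First, that ``for all large $n$ the minimum selects the stated branch'' holds simultaneously for all $|\eta|\le n^\delta$; this is immediate because the perturbing terms are $O(n^{\delta-1/2})\to 0$ uniformly. Second, that the Taylor remainders survive multiplication by $n$ (for the diffusivity) and by $n^{1/2}$ (for the drift): on $F^{n,\delta}$ a term $n^{-j/2}|\eta|^k$ is $O(n^{k\delta-j/2})$, and one checks that the relevant remainders give a contribution of order $n^{2\delta-1/2}$ to $nE^n_\eta[Y-\eta]$ and a smaller one to $nE^n_\eta[(Y-\eta)^2]$, so they vanish uniformly for the range of $\delta$ allowed in the statement. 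This is precisely why Lemma~\ref{lem:outsideF} is needed in tandem: the expansions above are valid only inside $F^{n,\delta}$, and Lemma~\ref{lem:outsideF} guarantees that the stationary mass --- and hence, in the application to Theorem~\ref{thm:main}, the contribution --- coming from $\eta\notin F^{n,\delta}$ is negligible. With these uniform bounds in hand the three limits follow exactly as in~\cite{bierkens2017}.
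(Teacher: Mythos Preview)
The paper does not actually prove this lemma: it merely records that the result ``is known in~\cite{bierkens2017}'' and then uses it in the proof of Theorem~\ref{thm:main}. Your proposal supplies the explicit argument that the citation stands in for, and it is correct. The structure --- reduce all moments to $p_\pm=P^n(\eta,\eta\pm 2n^{-1/2})$, determine which branch of the Metropolis minimum is active using $2\beta(m_0+h)=\log\frac{1+m_0}{1-m_0}>0$, Taylor expand $p_\pm$ in powers of $n^{-1/2}\eta$, and check that the remainders are $O(n^{2\delta-1/2})$ uniformly on $F^{n,\delta}$ --- is exactly the local kernel expansion carried out in~\cite{bierkens2017} for the other phases, so your approach and the paper's (implicit) approach coincide.

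Two minor remarks. First, the statement as printed contains evident typos (the first display should be the drift $nE^n_\eta[Y-\eta]$ converging to $-2\ell(h,\beta)\eta$, not a second-moment limit equal to $\ell^2$; the condition on $r$ and $k$ is vestigial); you interpreted this correctly. Second, your closing sentence about Lemma~\ref{lem:outsideF} being ``needed in tandem'' is a comment about how the present lemma is \emph{used} in Theorem~\ref{thm:main}, not about its proof; the proof itself is purely local and does not require any concentration input.
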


         \begin{proof}[Proof of Theorem \ref{thm:main}]
         	The proof follows the basic arguments of that of Theorem 1 in~\cite{bierkens2017}, along with the non-asymptotic concentration result in Lemma \ref{lem:outsideF}, and we reproduce the key steps for completeness. The basic approach is to compare the action on function in the domain of the generators of $Y^n$ and 
         	the diffusion process \eqref{eqn:limiting-diffusion}. Suppose that $G^n$ is the generator of $Y^n$ and $G$ is the one for the diffusion. Then, we know that,
         	\begin{align*}
         		G^n\phi(\eta)=n[P^n\phi(\eta)-\phi(\eta)],
         	\end{align*}
         and 
         \begin{align*}
         	G\phi(\eta)=-2\ell(h,\beta)\eta \frac{d\phi}{d\eta}+\frac12\sigma^2(h,\beta) \frac{d^2\phi}{d\eta^2},
         \end{align*}
     for $\phi\in D(G):=\{ \phi\in C_0^2(\Real):\eta\mapsto \eta\phi'(\eta)\in C_0(\Real)\}$. We have,

         	\begin{align*}
         		&\sup_{\eta\in F^{n, \delta}} |G^n \phi(\eta) -G\phi(\eta) |\\
         		\le & \sup_{\eta\in F^{n, \delta}}\Big| n E^n_\eta [\phi(Y^n)-\phi(\eta)]-n E^n_\eta[\phi'(\eta)(Y^n-\eta) +\frac12\phi''(\eta)(Y^n-\eta)^2]   \Big|\\ \le & \frac{1}{6}\|\phi^3\|_\infty E^n_\eta[|Y^n-\eta|^3]
         		\rightarrow 0, \quad \hbox { as } n\rightarrow \infty,
         	\end{align*}
         	where the first inequality follows from the definitions and second inequality follows from Taylor expansion and the convergence is the consequence of estimations of the first, second and higher order terms in Lemma \ref{lem:moments}. And 
         	\begin{align*}
         		P^n(Y^n \notin F^{n, \delta} \hbox{ for some } 0\le t\le T) \le n \pi^n (\eta^n(x) \notin F^{n, \delta}) \rightarrow 0\quad \hbox { as } n\rightarrow \infty,
         	\end{align*}
         	follows from Lemma \ref{lem:outsideF}. Then the desired result follows from  Corollary 4.8.7 in~\cite{ethier2009markov}.
         \end{proof}
         

         
         
         
         \bibliographystyle{abbrvnat}
         \bibliography{Lu}

     \end{document}